	\newtheorem{dfn}{Definition}[section]
	\newtheorem{thm}[dfn]{Theorem}
	\newtheorem{prop}[dfn]{Proposition}
	\newtheorem{cor}[dfn]{Corollary}
	\newtheorem{lem}[dfn]{Lemma}
	\newtheorem{rem}[dfn]{Remark}
	\newtheorem{ack}{Acknowledgements\!\!}
	\newcounter{yon}
	\numberwithin{equation}{section}
	\newcommand{\dist}{\mathop{\mathit{d}} \nolimits}
	\newcommand{\pr}{\mathop{\mathrm{pr}} \nolimits}
	\newcommand{\ric}{\mathop{\mathit{Ric}}        \nolimits}
\begin{document}

	\title[Exponential and Gaussian concentration of $1$-Lipschitz maps]
    {Exponential and Gaussian concentration of $1$-Lipschitz maps}
	\author[Kei Funano]{Kei Funano}
	\address{Department of Mathematics and Engineering
Graduate School of
Science and Technology
Kumamoto University
Kumamoto, 860-8555, Japan}
	\email{yahoonitaikou@gmail.com}
	\subjclass[2000]{53C21, 53C23}
	\keywords{mm-space, concentration of $1$-Lipschitz maps}
	\thanks{This work was partially supported by Research Fellowships of
	the Japan Society for the Promotion of Science for Young Scientists.}
	\dedicatory{}
	\date{\today}

	\maketitle


\begin{abstract}In this paper, we prove an exponential and Ganssian
 concentration inequality for $1$-Lipschitz maps from mm-spaces to
 Hadamard manifolds. In particular, we give a complete answer to a
 question by M. Gromov.
 \end{abstract}
	\setlength{\baselineskip}{5mm}

\section{Introduction and statement of the main result}
In this paper, we study the theory of the L\'{e}vy-Milman concentration of
$1$-Lipschitz maps from an mm-space (metric measure space) into an Hadamard manifold. 
An \emph{mm-space} $X=(X,\dist_X,\mu_X)$ is a complete separable metric
space $(X,\dist_X)$ with a Borel probability measure $\mu_X$. Let $N$ be an $m$-dimensional Hadamard
manifold, i.e., a complete simply-connected Riemannian manifold with
non-positive sectional curvature. Given a Borel measurable map $f:X\to N$
such that the push-forward measure $f_{\ast}(\mu_X)$ of $\mu_X$ by $f$ has the finite
moment of order $2$, we define its \emph{expectation} $\mathbb{E}(f)$ by the center of mass of the measure $f_{\ast}(\mu_X)$.

We shall consider a closed Riemannian manifold
$M$ as an mm-space with the volume measure $\mu_M$ normalized as $\mu_M(M)=1$. We denote by $\lambda_1(M)$ the first
	 non-zero eigenvalue of the Laplacian on $M$. In \cite[Section
 $3\frac{1}{2}.41$]{gromov}, M. Gromov proved that 
 \begin{align}\label{gs1}
  \mu_M(\{ x\in M \mid \dist_N(f(x),\mathbb{E}(f))\geq r\})\leq m/(\lambda_1(M)r^2)
  \end{align}for any $1$-Lipschitz map $f:M\to N$, where $N$ is any $m$-dimensional Hadamard manifold. He also
  asked in \cite[Section $3\frac{1}{2}.41$]{gromov} that if the
  right-hand side of the above
  inequality (\ref{gs1}) can be improved by the form $C_1e^{-C_2
  \sqrt{m/\lambda_1(M)}r}$ or the form
  $C_1e^{-C_2(m/\lambda_1(M))r^2}$. In this paper, we give an answer to
  this question affirmatively.

  To state our main result, we need some definition. We define the
  \emph{concentration function} $\alpha_X:(0,+\infty)\to \mathbb{R}$ of
  an mm-space $X$ as the supremum of
        $\mu_X(X\setminus A_{r})$, where $A$ runs over all Borel subsets
        of $X$ with $\mu_X(A)\geq 1/2$ and $A_{r}$ is an open
        $r$-neighbourhood of $A$. We shall consider an mm-space $X$ satisfying that
 \begin{align}\label{s1}
  \alpha_X(r)\leq C_Xe^{-c_Xr^p}
  \end{align}for any $r>0$ and some constants $c_X,C_X,p>0$. In the case of $p=1$ (resp., $p=2$), the space
  $X$ is said to have the \emph{exponential concentration} (resp., \emph{Gaussian concentration}). For example, a closed Riemannian
  manifold $M$ satisfies that $\alpha_M(r)\leq
  e^{-\sqrt{\lambda_1(M)}r/3}$ (\cite[Theorem 4.1]{milgro},
  \cite[Theorem 3.1]{ledoux}). If the manifold $M$ moreover satisfies
  that $\ric_M \geq \kappa >0$, then we have $\alpha_M(r)\leq
  e^{-\kappa r^2 /2}$ (\cite[Section 1.2, Remark
  2]{milgro}, \cite[Theorem 2.4]{ledoux}). For an mm-space satisfying
  (\ref{s1}) and $m\in \mathbb{N}$, we put
  \begin{align*}
  A_{m,X}:=  1+\frac{\sqrt{\pi}}{4}\max\{
   1,2C_X\}e^{2C_X + (m+1)/(4m-2)}\{2+e^{1/(4m-2)}\} 
  \end{align*}and
  \begin{align*}
  \widetilde{A}_{m,X}:=1+\frac{\sqrt{\pi}C_Xe^{(m+1)/(4m-2)}}{2} \{ 2+ e^{1/(4m-2)}\}.
  \end{align*}We also put
  \begin{align*}
   B_{m,X}:= 1+\frac{\sqrt{\pi}e^{(m+1)/(4m-2)}}{2} \max\{e^{(\pi
   C_X)^2/2}, 2C_X e^{(\pi C_X)^2}\}
   \end{align*}and
   \begin{align*}
    \widetilde{B}_{m,X}:=1+ \sqrt{\pi}C_X e^{(m+1)/(4m-2)}.
    \end{align*}
Our main result is the following.

 \begin{thm}\label{mth}Let an mm-space $X$ satisfy $(\ref{s1})$, $N$ be an
  $m$-dimensional Hadamard manifold, and $f:X \to N$ a $1$-Lipschitz
  map. Then, we have the following $(1)$ and $(2)$.

  \begin{enumerate}
   \item[$(1)$] If $p=1$, then,
  for any $r>0$,
  we have
  \begin{align}\label{s1.1}
   \mu_X(\{ x\in X\mid \dist_N(f(x),\mathbb{E}(f))\geq r\}) \leq \min\{
   A_{m,X}e^{-(c_X/\sqrt{2m})r} , \widetilde{A}_{m,X}e^{-(c_X /(2\sqrt{2m}))r} \}.
   \end{align}
                \item[$(2)$] If $p=2$, then, for any $r>0$, we have
  \begin{align}\label{s1.2}
   \mu_M(\{ x\in X \mid \dist_N(f(x),\mathbb{E}(f))\geq r\}) \leq \min\{
   B_{m,X} e^{-(c_X/ (8m))r^2}, \widetilde{B}_{m,X}e^{-(c_X / (16m))r^2}\}.
   \end{align}
   \end{enumerate}
  \end{thm}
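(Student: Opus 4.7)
The plan is to reduce the vector-valued problem to one-dimensional concentration by projecting onto an orthonormal frame at the center of mass. The key geometric input is that in a Hadamard manifold the logarithm map $\log_{p_0}\colon N\to T_{p_0}N$ at any point $p_0$ is $1$-Lipschitz, a direct consequence of the Toponogov--Rauch distance comparison under non-positive sectional curvature, while $|\log_{p_0}(q)|=\dist_N(p_0,q)$ by construction.

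Concretely, set $p_0:=\mathbb{E}(f)$, fix an orthonormal basis $\{e_1,\dots,e_m\}$ of $T_{p_0}N$, and define $f_i(x):=\langle\log_{p_0}(f(x)),e_i\rangle$. Each $f_i\colon X\to\mathbb{R}$ is $1$-Lipschitz (composition of a $1$-Lipschitz map with a norm-one linear functional), and the Euler-Lagrange condition for the center of mass yields $\int_X\log_{p_0}(f)\,d\mu_X=0$, hence $\int_X f_i\,d\mu_X=0$ for each $i$. Moreover $\dist_N(f(x),p_0)^2=\sum_{i=1}^m f_i(x)^2$. From hypothesis (\ref{s1}) applied to $f_i$, combined with the standard estimate $|m_i-\mathbb{E}(f_i)|\le\int_0^\infty\alpha_X(t)\,dt$ that compares the median $m_i$ of $f_i$ to zero, an integration-by-parts produces explicit moment bounds $\int_X|f_i|^{2k}\,d\mu_X\le M_{k,p}(c_X,C_X)$ uniform in $i$.

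To transfer these scalar bounds to $\Phi(x):=\dist_N(f(x),p_0)$, I invoke the power-mean inequality $(\sum_i f_i^2)^k\le m^{k-1}\sum_i f_i^{2k}$, which gives
\[
\int_X\Phi^{2k}\,d\mu_X\;\le\;m^k\max_i\int_X|f_i|^{2k}\,d\mu_X\;\le\;m^kM_{k,p}(c_X,C_X).
\]
Markov's inequality then yields $\mu_X(\Phi\ge r)\le m^kM_{k,p}/r^{2k}$, and I optimize over $k\in\mathbb{N}$. For $p=1$ the factor $M_{k,1}$ grows like $(2k)!/c_X^{2k}$, so Stirling's formula at $k$ proportional to $c_Xr/\sqrt{m}$ produces the exponential decay asserted in (\ref{s1.1}); for $p=2$ one has $M_{k,2}\sim k!/c_X^k$, and the optimum at $k$ proportional to $c_Xr^2/m$ produces the Gaussian decay in (\ref{s1.2}). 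The alternative constants $\widetilde{A}_{m,X}$ and $\widetilde{B}_{m,X}$ I would obtain by the parallel Laplace-transform route: bound $\mathbb{E}(e^{\lambda\sum|f_i|})$ (resp.\ $\mathbb{E}(e^{\lambda\sum f_i^2})$) via H\"older by $\max_i\mathbb{E}(e^{m\lambda|f_i|})$ (resp.\ $\max_i\mathbb{E}(e^{m\lambda f_i^2})$), control each one-dimensional exponential moment by the concentration hypothesis, and apply the exponential Markov inequality.

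The main obstacle will be the precise bookkeeping of the explicit prefactors $A_{m,X}$, $\widetilde{A}_{m,X}$, $B_{m,X}$, $\widetilde{B}_{m,X}$. The fractional terms $e^{(m+1)/(4m-2)}$ and $e^{1/(4m-2)}$ arise from evaluating $\int_0^\infty t^{2k-1}e^{-c_Xt^p}\,dt$ at the optimal integer $k$ and from the freedom to shift the exponent by one to sharpen the prefactor; the $e^{2C_X}$ factor in $A_{m,X}$ reflects the cost of converting a median bound into an expectation bound in the one-dimensional step before the power-mean inequality, while the two bounds in each $\min\{\cdot,\cdot\}$ encode the trade-off between the sharper exponential rate obtained by the moment method and the cleaner prefactor obtained by the Laplace-transform variant.
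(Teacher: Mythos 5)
Your reduction to $\mathbb{R}^m$ via the logarithm map at $\mathbb{E}(f)$ is exactly the paper's Proposition \ref{p3.1}, and your scalar step (median-to-expectation correction plus moment bounds from the concentration function) is the paper's Lemma \ref{l2.3}. Where you genuinely diverge is in the dimensional reduction: you fix an orthonormal frame, use the power-mean inequality $(\sum_i f_i^2)^k\le m^{k-1}\sum_i f_i^{2k}$, and run a moment/Markov argument optimized over integer $k$. The paper instead uses the Ledoux--Oleszkiewicz rotation-invariance identity $|v|^q=M_q^{-1}\int_{\mathbb{R}^m}|z\cdot v|^q\,d\gamma_m(z)$ to bound every moment $V_q(f)$ at once, then sums the series $\sum_k\lambda^kV_k(f)^k/k!$ into closed-form Gaussian integrals of the type $\int|z|e^{(C_2\lambda)^2|z|^2}d\gamma_m(z)=(1-2(C_2\lambda)^2)^{-(m+1)/2}\int|z|\,d\gamma_m(z)$ and applies exponential Chebyshev. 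That closed-form summation is precisely what yields the explicit prefactors: $e^{(m+1)/(4m-2)}$ is $(1-1/(2m))^{-(m+1)/2}$ evaluated at the chosen $\lambda$, not an artifact of evaluating $\int_0^\infty t^{2k-1}e^{-c_Xt^p}dt$ at an optimal integer $k$ as you suggest. Your primary route does recover the correct decay rates (for $p=1$ the optimum $2k\sim c_Xr/\sqrt m$ gives $e^{-c_Xr/\sqrt m}$ up to a $\sqrt k$ Stirling prefactor, absorbable into $e^{-c_Xr/\sqrt{2m}}$), but it will not reproduce the specific constants $A_{m,X},\widetilde A_{m,X},B_{m,X},\widetilde B_{m,X}$ that the theorem asserts, and the theorem is stated with those constants.

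There is also one step that fails outright: your proposed derivation of the tilde constants in the case $p=1$. Bounding $\mathbb{E}(e^{\lambda\sum_i|f_i|})$ by H\"older through $\max_i\mathbb{E}(e^{m\lambda|f_i|})$ requires the inner exponential moments to be finite, and under exponential concentration $\mathbb{E}(e^{m\lambda|f_i|})<\infty$ only when $m\lambda$ is below (a constant times) $c_X$. This forces $\lambda=O(c_X/m)$, and the exponential Chebyshev inequality then yields a rate $e^{-O(c_X/m)\,r}$, which is weaker by a factor of order $\sqrt m$ in the exponent than the claimed $e^{-(c_X/(2\sqrt{2m}))r}$. (For $p=2$ the same H\"older device does give the right order $e^{-O(c_X/m)r^2}$, so that half is fine.) To repair the $p=1$ case you must exponentiate $|f|=(\sum_if_i^2)^{1/2}$ itself rather than the crude upper bound $\sum_i|f_i|$ --- which is in effect what the paper's Gaussian-averaging computation accomplishes, since the identity for $|v|^q$ sees the Euclidean norm exactly rather than through a coordinatewise majorant.
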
As a corollary of Theorem \ref{mth}, we obtain the
   following. For $m\in \mathbb{N}$, we put
    \begin{align*}
    A_m:=1+\frac{\sqrt{\pi}e^{(9m-3)/(4m-2)}}{2}\{ 2 + e^{1/(4m-2)}\}
    \text{ and }\widetilde{A}_m:= 1 +
    \frac{\sqrt{\pi}e^{(m+1)/(4m-2)}}{2}\{ 2+ e^{1/(4m-2)}\}.
    \end{align*}We also put
\begin{align*}
    B_m:= 1+\sqrt{\pi}e^{\pi^2 + (m+1)/(4m-2)} \text{ and
    }\widetilde{B}_m:= 1+\sqrt{\pi}e^{(m+1)/(4m-2)}.
    \end{align*}
  \begin{cor}\label{mc}Let $M$ be a closed Riemannian manifold, $N$ an
   $m$-dimensional Hadamard manifold, and $f:M\to N$ a $1$-Lipschitz
   map. Then, we have the following $(1)$ and $(2)$.
\begin{enumerate}
 \item[$(1)$] For any $r>0$, we have
   \begin{align}\label{ssss}
     &\mu_M(\{ x\in M \mid \dist_N(f(x),\mathbb{E}(f))\geq r\})\\ \leq \ &
    \min\{A_m e^{-3^{-1}\sqrt{\lambda_1(M)/(2m)} r}, \widetilde{A}_m
    e^{-6^{-1}\sqrt{\lambda_1(M)/(2m)}r} \} \tag*{}.
    \end{align}
 \item[$(2)$] If moreover
   $\ric_M\geq \kappa>0$ holds, then for any $r>0$ we also have
   \begin{align}\label{ssss1}
      &\mu_M(\{ x\in M \mid \dist_N(f(x),\mathbb{E}(f))\geq r\})\\ \leq
    \ &
     \min \{ B_{m}e^{-(\kappa / (16m))r^2} , \widetilde{B}_{m}e^{-(\kappa
    / (32 m))r^2}\}. \tag*{}
    \end{align}
   \end{enumerate}
   \end{cor}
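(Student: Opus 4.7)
The plan is to apply Theorem \ref{mth} directly, using the two standard concentration estimates for closed Riemannian manifolds that are cited in the introduction. For part $(1)$, I would invoke the bound $\alpha_M(r)\leq e^{-\sqrt{\lambda_1(M)}r/3}$ from Milman--Gromov/Ledoux, which puts $M$ in the framework $(\ref{s1})$ with $p=1$, $C_X=1$, and $c_X=\sqrt{\lambda_1(M)}/3$. For part $(2)$, I would invoke the Gaussian concentration bound $\alpha_M(r)\leq e^{-\kappa r^2/2}$ valid under $\ric_M\geq \kappa>0$, giving $p=2$, $C_X=1$, and $c_X=\kappa/2$. With these identifications the corollary is an immediate specialisation of Theorem \ref{mth}, and what remains is to verify that the constants and exponents match.

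For the exponents in $(1)$, I would compute $c_X/\sqrt{2m}=\sqrt{\lambda_1(M)}/(3\sqrt{2m})=3^{-1}\sqrt{\lambda_1(M)/(2m)}$ and $c_X/(2\sqrt{2m})=6^{-1}\sqrt{\lambda_1(M)/(2m)}$, matching $(\ref{ssss})$. For the exponents in $(2)$, I would compute $c_X/(8m)=\kappa/(16m)$ and $c_X/(16m)=\kappa/(32m)$, matching $(\ref{ssss1})$.

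For the prefactors in $(1)$, substituting $C_X=1$ into $A_{m,X}$ gives $\max\{1,2C_X\}=2$ and $e^{2C_X+(m+1)/(4m-2)}=e^{2+(m+1)/(4m-2)}$; a short algebraic manipulation shows $2+(m+1)/(4m-2)=(9m-3)/(4m-2)$, so that $A_{m,X}$ collapses to $A_m$. The quantity $\widetilde{A}_{m,X}$ becomes $\widetilde{A}_m$ by simply setting $C_X=1$. For the prefactors in $(2)$, setting $C_X=1$ forces $\max\{e^{\pi^2/2},2e^{\pi^2}\}=2e^{\pi^2}$, after which $B_{m,X}$ simplifies to $B_m$; likewise $\widetilde{B}_{m,X}$ becomes $\widetilde{B}_m$.

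There is no real obstacle here, since the corollary is a packaging statement: the content lies entirely in Theorem \ref{mth} together with the two classical concentration bounds for Riemannian manifolds. The only point that requires any care is the bookkeeping of the constants, in particular the identity $2+(m+1)/(4m-2)=(9m-3)/(4m-2)$ in $(1)$ and the observation $2e^{\pi^2}>e^{\pi^2/2}$ used to evaluate the maximum in $(2)$.
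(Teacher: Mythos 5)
Your proposal is correct and is exactly the argument the paper intends: the corollary is the specialisation of Theorem \ref{mth} to $C_X=1$ with $c_X=\sqrt{\lambda_1(M)}/3$ (resp.\ $c_X=\kappa/2$), and your constant checks, including $2+(m+1)/(4m-2)=(9m-3)/(4m-2)$ and $\max\{e^{\pi^2/2},2e^{\pi^2}\}=2e^{\pi^2}$, all verify.
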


   The inequality (\ref{ssss}) is sharper than the inequality
   (\ref{gs1}) if $r$ is
   large enough. In the case where $M=\mathbb{S}^n$, the inequality
   (\ref{ssss1}) is sharp in a sense (see Remark \ref{2354}). Theorem \ref{mth} and Corollary \ref{mc} answer the question by
   Gromov. To prove the theorem, we use a traditional method of the
   Gibbs-Laplace transform (see \cite[Section 1.6]{ledoux}), i.e., we estimate $\int_Xe^{\lambda
   \dist_N(f(x),\mathbb{E}(f))}d\mu_X(x)$ for $\lambda>0$ from above, and then
   substitute a suitable value to $\lambda$. To do this, we estimate
   $\int_X \dist_N(f(x), \mathbb{E}(f))^qd\mu_X(x)$ for $q\geq 1$ by using
   the method of M. Ledoux and K. Oleszkiewicz in \cite[Theorem
   1]{ledole}.

   For $m\leq n$, we consider
   the $m$-dimensional standard unit sphere
   $\mathbb{S}^m$ in $\mathbb{R}^{m+1}$ centered at zero as a subset of $\mathbb{S}^{n}$ in a
   natural way. As an application of Corollary \ref{mc}, we estimate
   $\mu_{\mathbb{S}^n}(\mathbb{S}^n \setminus (\mathbb{S}^m)_r)$ from
   above (Corollary \ref{egc4}). In \cite{artsein}, S. Artstein studied
   an asymptotic behavior of the values $\mu_{\mathbb{S}^n}(\mathbb{S}^n
   \setminus (\mathbb{S}^m)_r)$. We will compare our estimate with
   those Artstein's results (see Remark \ref{2354}). Denote by $\gamma_m$ the standard
  Gaussian measure on $\mathbb{R}^m$ with the density
  $(2\pi)^{-m/2}e^{-|x|^2/2}$. In \cite[Theorem 1]{ledole}, motivated by
  the work of Gromov (\cite{gromov2}), Ledoux and Oleszkiewicz obtained
    that if an mm-space having the Gaussian concentration (\ref{s1}),
    then for an $m$-dimensional Hadamard manifold $N$ and a $1$-Lipschitz map $f:X\to N$, we have
   \begin{align}\label{s1.4}
    \mu_X(\{ x\in X \mid \dist_N(f(x),\mathbb{E}(f))\geq r\})\leq C C_X
    \gamma_m ( \{  x\in \mathbb{R}^m \mid |x|\geq C \sqrt{c_X }r\}),
    \end{align}where $C>0$ is a universal constant. Their estimate (\ref{s1.4}) is
    highly relevant with our two estimate in Theorem \ref{mth}. We will compare these
    estimate (see Remark \ref{remrem}).

\section{Preliminaries}
\subsection{Concentration of 1-Lipschitz functions around the expetcations}
In this subsection we explain some basic facts on the theory of the L\'{e}vy-Milman
concentration of $1$-Lipschitz functions, which will be useful to prove
the main theorem. The theory of the concentration of $1$-Lipschitz functions was introduced by V. Milman in his
investigations of asymptotic geometric analysis (\cite{mil1,mil2,mil3}). 

Let $X$ be an mm-space and $f:X\to \mathbb{R}$ a Borel measurable
function. A number $m_f\in \mathbb{R}$ is called a \emph{median} of $f$
if it satisfies that $\mu_X(\{x\in X \mid f(x)\geq m_f\})\geq 1/2$ and
$\mu_X(\{x\in X \mid f(x)\leq m_f\})\geq 1/2$. We remark that $m_f$ does
exist, but it is not unique for $f$ in general.
\begin{lem}[{\cite[Section 1.3]{ledoux}}]\label{l2.1}Let $X$ be an mm-space. Then, for any $1$-Lipschitz
 function $f:X\to \mathbb{R}$ and median $m_f$ of $f$, we have
\begin{align*}
 \mu_X(\{ x\in X\mid |f(x)-m_f|\geq r\})\leq 2\alpha_X(r).
 \end{align*}Conversely, if a function $\alpha:(0,+\infty)\to
 [0,+\infty)$ satisfies that
 \begin{align*}
  \mu_X(\{  x\in X \mid |f(x)-m_f|\geq r\}) \leq \alpha(r)
  \end{align*}for any $1$-Lipschitz function $f:X\to \mathbb{R}$ and median $m_f$ of $f$,
 then we have
 \begin{align*}
   \alpha_X(r)\leq \alpha(r).
 \end{align*}
 \end{lem}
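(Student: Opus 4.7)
The plan is to treat the two implications separately, using the standard device that distance functions to sets of measure at least $1/2$ exactly realize the supremum defining $\alpha_X$.

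For the direct implication, I would split the two-sided tail around the median into two one-sided tails. Set $A_{+}:=\{x\in X\mid f(x)\geq m_f\}$ and $A_{-}:=\{x\in X\mid f(x)\leq m_f\}$; by definition of a median, $\mu_X(A_{\pm})\geq 1/2$, so both sets are admissible in the supremum defining $\alpha_X$. Using that $f$ is $1$-Lipschitz, if $f(x)\geq m_f+r$ then for every $y\in A_{-}$ one has $r\leq f(x)-f(y)\leq \dist_X(x,y)$, hence $x$ lies in $X\setminus (A_{-})_r$; symmetrically, $f(x)\leq m_f-r$ forces $x\in X\setminus (A_{+})_r$. Since $\{|f-m_f|\geq r\}$ is the union of these two one-sided tails, adding the bounds $\mu_X(X\setminus (A_{\pm})_r)\leq \alpha_X(r)$ produces the asserted factor of $2$.

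For the converse, I would test the hypothesis on the $1$-Lipschitz function $f(x):=\dist_X(x,A)$, where $A$ is an arbitrary Borel set with $\mu_X(A)\geq 1/2$. Because $f\geq 0$ everywhere and $A\subset \{f=0\}$, we have $\mu_X(\{f\leq 0\})\geq \mu_X(A)\geq 1/2$ and $\mu_X(\{f\geq 0\})=1$, so $m_f=0$ is a median of $f$. The identity $\{x\in X\mid |f(x)-0|\geq r\}=X\setminus A_r$ then converts the assumption into $\mu_X(X\setminus A_r)\leq \alpha(r)$, and taking the supremum over admissible $A$ yields $\alpha_X(r)\leq \alpha(r)$.

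I do not expect a real obstacle: both directions are bookkeeping once one recognizes that distance functions to half-weight sets serve as the extremal $1$-Lipschitz functions whose natural median is zero. The only point to be mildly careful about is the open/closed convention for $A_r$, which matches cleanly with $\{\dist_X(\cdot,A)\geq r\}$ on the nose.
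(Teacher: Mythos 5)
Your argument is correct and complete in both directions: the splitting of the two-sided tail into the one-sided tails governed by $A_{\pm}=\{f\gtrless m_f\}$, and the testing of the converse hypothesis on $f=\dist_X(\cdot,A)$ with median $0$, is exactly the classical argument from Ledoux's book. The paper itself states this lemma with only a citation and gives no proof, so there is nothing to diverge from; your handling of the open-neighbourhood convention ($X\setminus A_r=\{x\mid \dist_X(x,A)\geq r\}$) is the only delicate point and you have it right.
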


  Although the
   following lemma is stated in \cite{ledoux}, we prove them for the
   completeness of this paper. Given $p>0$, we put $K_p:= \int_0^{+\infty}e^{-r^p}dr=\frac{1}{p}\Gamma\big(\frac{1}{p}\big)$.
\begin{lem}[{cf.~\cite[Proposition 1.8]{ledoux}}]\label{l2.3}Assume that an mm-space
 $X$ satisfies (\ref{s1}). Then, for any $p\geq 1$ and any $1$-Lipschitz function $f:X\to \mathbb{R}$ with
 expectation zero, we have
 \begin{align*}
  \mu_X( \{x\in X \mid |f(x)|\geq r \})\leq \max \{e^{2(C_X K_p)^p },2C_Xe^{(2 C_X K_p)^p} \}e^{-2^{1-p}c_Xr^p}.
 \end{align*}
 \begin{proof}By virtue of Lemma \ref{l2.1}, we have
  \begin{align}\label{s2.1}
   \mu_X(\{ x\in X \mid |f(x)-m_f|\geq r\})\leq 2C_Xe^{-c_Xr^p}
   \end{align}for any $r>0$. By using this, we calculate
  \begin{align}\label{s2.2}
   |m_f |\leq \ &\int_X|
   f(x)-m_f|d\mu_X(x)\\ \tag*{} \leq \ &\int_0^{+\infty}\mu_X(\{x\in X \mid |f(x)-m_f|\geq
   r\}) dr\\ \tag*{}
   \leq \ &2C_X\int_0^{+\infty}e^{-c_Xr^p}dr\\ \tag*{} = \ &\frac{2C_XK_p}{(c_X)^{1/p}}=:\overline{\alpha}
   \end{align}If $r>\overline{\alpha}$, combining (\ref{s2.1}) with (\ref{s2.2}), we then get
  \begin{align*}
   \mu_X(\{x\in X \mid |f(x)|\geq r \})\leq 2C_Xe^{-c_X
   (r-\overline{\alpha})^p}\leq \ &2C_Xe^{-c_X 2^{1-p}r^p
   +c_X\overline{\alpha}^p}\\ \leq \
   &2C_Xe^{(2C_XK_p)^p}e^{-c_X2^{1-p}r^p}. \tag*{}
   \end{align*}If $r\leq \overline{\alpha}$, we then obtain
  \begin{align*}
   \mu_X(\{x\in X \mid |f(x)|\geq r\})\leq 1 =
   e^{2(C_XK_p)^p}e^{-2(C_XK_p)^p}=\ &
   e^{2(C_XK_p)^p}e^{-2^{1-p}c_X\overline{\alpha}^p}\\ \leq \ &
   e^{2(C_XK_p)^p}e^{-2^{1-p}c_Xr^p}. \tag*{}
   \end{align*}This completes the proof.
  \end{proof}
 \end{lem}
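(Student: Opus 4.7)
The plan is to reduce the concentration of $f$ around its expectation to the concentration of $f$ around a median, which is already provided by Lemma \ref{l2.1}, and then to control the displacement between the two centers. Concretely, pick any median $m_f$ of $f$; Lemma \ref{l2.1} together with hypothesis (\ref{s1}) yields
\[
 \mu_X(\{x \in X \mid |f(x) - m_f| \geq r\}) \leq 2 C_X e^{-c_X r^p}.
\]

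Next I would bound $|m_f|$ using the assumption $\mathbb{E}(f) = 0$. Since $\int_X f \, d\mu_X = 0$, we may rewrite $m_f = \int_X (m_f - f) \, d\mu_X$, so by the triangle inequality and the layer-cake representation
\[
 |m_f| \leq \int_X |f - m_f| \, d\mu_X = \int_0^{+\infty} \mu_X(\{|f - m_f| \geq r\}) \, dr \leq \frac{2 C_X K_p}{c_X^{1/p}} =: \overline{\alpha}.
\]

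Then I would split on the size of $r$. When $r > \overline{\alpha}$, the inclusion $\{|f| \geq r\} \subseteq \{|f - m_f| \geq r - \overline{\alpha}\}$ together with the first display gives $\mu_X(\{|f| \geq r\}) \leq 2 C_X e^{-c_X (r - \overline{\alpha})^p}$; combining this with the elementary convexity inequality $(r - \overline{\alpha})^p \geq 2^{1-p} r^p - \overline{\alpha}^p$, valid for $p \geq 1$, and the identity $c_X \overline{\alpha}^p = (2 C_X K_p)^p$, produces the bound $2 C_X e^{(2 C_X K_p)^p} e^{-2^{1-p} c_X r^p}$. When $r \leq \overline{\alpha}$, one simply uses the trivial estimate $\mu_X(\{|f| \geq r\}) \leq 1$: since $2^{1-p} c_X r^p \leq 2^{1-p} c_X \overline{\alpha}^p = 2 (C_X K_p)^p$, this implies $1 \leq e^{2(C_X K_p)^p} e^{-2^{1-p} c_X r^p}$. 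Taking the maximum of the two prefactors gives a single bound valid for all $r > 0$.

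I do not expect a real obstacle here; the whole argument is bookkeeping once one has the convexity estimate $(a+b)^p \leq 2^{p-1}(a^p + b^p)$ for $p \geq 1$ in hand. The only small subtlety is noticing that the small-$r$ and large-$r$ regimes each dictate a different constant, which is precisely why the final bound is stated as a maximum of two exponential prefactors rather than a single clean one.
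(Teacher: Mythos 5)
Your proposal is correct and follows essentially the same route as the paper's own proof: concentration around a median via Lemma \ref{l2.1}, the bound $|m_f|\leq\overline{\alpha}$ from the zero-expectation hypothesis and the layer-cake formula, and the case split at $r=\overline{\alpha}$ using the convexity inequality $(r-\overline{\alpha})^p\geq 2^{1-p}r^p-\overline{\alpha}^p$. No gaps; you merely make explicit the convexity step that the paper leaves implicit.
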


 \subsection{Expectation of a map to an Hadamard manifold}

 In this subsection we define the expectation of a Borel measurable map
 from an mm-space to an Hadamard manifold. In order to define the expectation, we first
 explain some basic facts on the barycenter of a Borel probability measure on an Hadamard manifold. 

 Let $N$ be an Hadamard manifold. We denote by $\mathcal{P}^2(N)$ the
 set of all Borel probability measure $\nu $ on $N$
 having the finite moment of order $2$, i.e.,
 \begin{align*}
  \int_N \dist_N(x,y)^2d\nu(y)<+\infty
  \end{align*}for some (hence all) $x\in N$. A point $x_0\in N$ is
  called the \emph{barycenter} of a measure $\nu\in \mathcal{P}^2(N)$ if $x_0$ is the
  unique minimizing point of the function
  \begin{align*}
   N \ni x\mapsto \int_N \dist_N(x,y)^2d\nu(y)\in \mathbb{R}.
   \end{align*}We denote the point $x_0$ by $b(\nu)$. It is well-known
   that every $\nu \in \mathcal{P}^2(N)$ has the barycenter
   (\cite[Proposition 4.3]{sturm}).

A simple variational argument implies the following two lemmas.
\begin{lem}[{cf.~\cite[Proposition 5.4]{sturm}}]\label{bl1}For each
 $\nu\in \mathcal{P}^2(\mathbb{R}^m)$, we have 
 \begin{align*}
  b (\nu)= \int_{\mathbb{R}^m} y  d\nu(y).
  \end{align*}
\end{lem}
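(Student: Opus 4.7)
The plan is to reduce the claim to the elementary variance decomposition on $\mathbb{R}^m$. Since the squared Euclidean norm is strictly convex, the minimizer of $x \mapsto \int |x-y|^2\, d\nu(y)$ is unique, and a direct algebraic manipulation should identify it as the mean vector.

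First, I would verify that the candidate $\overline{y} := \int_{\mathbb{R}^m} y\, d\nu(y)$ is well defined. Because $\nu \in \mathcal{P}^2(\mathbb{R}^m)$, Cauchy--Schwarz yields $\int_{\mathbb{R}^m} |y|\, d\nu(y) \leq \bigl(\int_{\mathbb{R}^m} |y|^2\, d\nu(y)\bigr)^{1/2} < +\infty$, so each coordinate of $\overline{y}$ is a finite real number and $\overline{y} \in \mathbb{R}^m$ makes sense.

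Next, I would expand $|x-y|^2 = |x|^2 - 2\langle x, y\rangle + |y|^2$ and integrate against $\nu$ to obtain, after completing the square,
\begin{align*}
\int_{\mathbb{R}^m} |x-y|^2\, d\nu(y) = |x - \overline{y}|^2 + \int_{\mathbb{R}^m} |y|^2\, d\nu(y) - |\overline{y}|^2.
\end{align*}
The two rightmost terms are independent of $x$, while $|x - \overline{y}|^2 \geq 0$ with equality if and only if $x = \overline{y}$. Therefore the function $x \mapsto \int_{\mathbb{R}^m} |x-y|^2\, d\nu(y)$ attains its unique minimum at $x = \overline{y}$, and by the definition recalled just before the lemma this means $b(\nu) = \overline{y}$.

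There is essentially no obstacle in this argument; the entire content is the variance identity in $\mathbb{R}^m$. Unlike the general Hadamard manifold case (where one appeals to first-variation formulas and convexity of the squared distance function along geodesics), the Euclidean setting admits this clean algebraic completion of the square, so no curvature considerations are needed.
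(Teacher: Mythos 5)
Your proof is correct and is exactly the elementary argument the paper intends: the paper gives no written proof, saying only that ``a simple variational argument implies the following two lemmas,'' and your completion of the square (together with the Cauchy--Schwarz check that $\overline{y}$ is well defined) is a complete, self-contained version of that argument. The only cosmetic difference is that the paper's phrasing suggests the first-order condition $\int 2(x-y)\,d\nu(y)=0$, whereas you identify the minimizer algebraically via the variance identity; the two routes are interchangeable here.
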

 \begin{lem}[{cf.~\cite[Proposition 5.10]{sturm}}]\label{bl2}Let $N$ be an Hadamard
  manifold and $\nu\in \mathcal{P}^2(N)$. Then $x= b(\nu)$ if and only if
  \begin{align*}
   \int_N \exp^{-1}_x (y) d\nu(y)=0.
   \end{align*}In particular, identifying the tangent space of $N$ at
  $b(\nu)$ with the Euclidean space of the same dimension, we have $b ((\exp^{-1}_{b(\nu)})_{\ast}(\nu))=0$.
  \end{lem}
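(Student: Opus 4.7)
The plan is to characterize $b(\nu)$ via the first-order optimality condition for the strictly convex function
\[
F(x):=\int_N \dist_N(x,y)^2\,d\nu(y),
\]
whose unique minimizer, by definition, is $b(\nu)$. Because $N$ is Hadamard, $\exp_x:T_xN\to N$ is a diffeomorphism for every $x\in N$, so $\exp_x^{-1}(y)$ is globally defined and $\dist_N(x,y)^2=|\exp_x^{-1}(y)|^2$ depends smoothly on $x$. Moreover, the squared distance on a Hadamard manifold is strictly convex, so $F$ is strictly convex and its unique critical point coincides with its unique minimizer. Hence the characterization of $b(\nu)$ reduces to the identity $\grad F(x)=0$.

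First I would compute $\grad F$ using the standard Riemannian identity
\[
\grad_{x}\Bigl(\tfrac{1}{2}\dist_N(x,y)^2\Bigr)=-\exp_x^{-1}(y),
\]
valid for all $x,y\in N$ since there is no cut locus on $N$. To exchange the gradient with the integral I would use the finite second-moment assumption $\nu\in\mathcal P^2(N)$: a local bound of the form $|\exp_x^{-1}(y)|=\dist_N(x,y)\le \dist_N(x_0,y)+\dist_N(x_0,x)$ provides an $L^1(\nu)$ majorant on a neighborhood of any fixed $x_0$, so dominated convergence legitimates differentiation under the integral sign. This yields
\[
\grad F(x)=-2\int_N \exp_x^{-1}(y)\,d\nu(y),
\]
and the equivalence $x=b(\nu)\iff \int_N \exp_x^{-1}(y)\,d\nu(y)=0$ follows.

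For the \emph{in particular} statement, fix $x_0:=b(\nu)$ and identify $T_{x_0}N$ with $\mathbb{R}^m$ via an orthonormal frame. The push-forward measure $\mu:=(\exp_{x_0}^{-1})_{\ast}(\nu)$ belongs to $\mathcal P^2(\mathbb{R}^m)$ because $\exp_{x_0}^{-1}$ is an isometry at the origin and $\nu$ has finite second moment. By Lemma \ref{bl1}, its barycenter equals its mean,
\[
b(\mu)=\int_{\mathbb{R}^m} z\,d\mu(z)=\int_N \exp_{x_0}^{-1}(y)\,d\nu(y),
\]
which vanishes by the first part applied at $x=x_0=b(\nu)$.

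The most delicate step is the differentiation under the integral sign, together with the invocation of strict convexity of $F$ to pass from a critical-point condition to the minimizer characterization; both are standard on Hadamard manifolds, the former requiring only the $\mathcal P^2$ hypothesis and the latter following from the CAT$(0)$ property. Everything else is a direct translation via the exponential map.
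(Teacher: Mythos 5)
Your argument is correct and is exactly the ``simple variational argument'' the paper alludes to (citing Sturm) without writing out: the first-order condition for the strictly convex functional $F$, via $\grad_x\bigl(\tfrac12 \dist_N(x,y)^2\bigr)=-\exp_x^{-1}(y)$, with the $\mathcal{P}^2$ hypothesis justifying differentiation under the integral, and Lemma~\ref{bl1} for the last claim. Nothing further is needed.
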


     Let $f:X\to N$ be a Borel measurable map from an mm-space $X$ to an
     Hadamard manifold $N$ satisfying $f_{\ast}(\mu_X)\in
     \mathcal{P}^2(N)$. We define the \emph{expectation}
     $\mathbb{E}(f)\in N$ of the map $f$ by the point
$b(f_{\ast}(\mu_X))$. By Lemma \ref{bl1}, in the case where $N$ is a Euclidean
     space, this definition
coincides with the classical one:
\begin{align*}
 \mathbb{E}(f)=\int_{X}f(x)d\mu_X(x).
 \end{align*}
           \section{Proof of the main theorem}
           Let $X$ be an mm-space satisfying (\ref{s1}) and $f:X\to
 \mathbb{R}^m$ a $1$-Lipschitz map with expectation zero.
 To prove the main theorem, we shall estimate $V_q(f):=(\int_X|f(x)|^q
 d\mu_X(x))^{1/q}$ and $\widetilde{V}_q(f):= (\int_{X\times X} |f(x)-f(y)|^q
 d(\mu_{X}\times \mu_X)(x,y))^{1/q}$ for $q\geq 1$. We show Ledoux and Oleskiewicsz's argument in
 \cite[Theorem 1]{ledole} as follows.

 Let $\varphi:X\to \mathbb{R}$ be an arbitrary $1$-Lipschitz
       function with expectation zero and $q\geq 1$. For any $\alpha>-1$, we put
      \begin{align*}
       M_{\alpha}:= \int_{\mathbb{R}}|s|^{\alpha}  d\gamma_1 (s) =
       2^{\alpha /2}\pi^{-1/2}\Gamma\Big(\frac{\alpha +1}{2}\Big).
       \end{align*}By virtue of Lemma \ref{l2.3}, we
       obtain $ \mu_X(\{ x\in X \mid |\varphi(x)|\geq r\})\leq
       C_1e^{-C_2r^p}$, where both $C_1$ and $C_2$ are defined by
       \begin{align*}C_1:=\max\{ e^{2(C_X K_p)^p},
        2C_Xe^{(2C_XK_p)^p}     \} \text{ and } C_2:= 2^{1-p}c_X.
        \end{align*}We calculate that
        \begin{align}\label{egs9}
         \int_X |\varphi(x)|^qd\mu_X(x)= \ &\int_{0}^{+\infty} \mu_X( \{
        x\in X \mid |\varphi(x)|\geq r\})  d (r^q) \\
        \leq \ &C_1
        \int_{0}^{+\infty}e^{-C_2r^p}  d(r^q) \tag*{} \\ = \ &\frac{\sqrt{2\pi}qC_1M_{\frac{2q}{p}-1}}{p(2C_2)^{q/p}}. \tag*{}
         \end{align}
         Given any $1$-Lipschitz map $f:X\to \mathbb{R}^m$ with expectation zero and $z\in \mathbb{R}^m$, the map
        $z\cdot f:X\to \mathbb{R}$ is the $|z|$-Lipschitz function with
        expectation zero. By using the inequality (\ref{egs9}), we hence have
        \begin{align*}
         V_q(f)^q= \ &
         \int_{X}\Big\{\frac{1}{M_q}\int_{\mathbb{R}^m}|z\cdot f(x)|^q 
         d\gamma_m(z)\Big\}d\mu_X(x)  \\
         \leq \ &   \frac{\sqrt{2\pi}
         qC_1M_{\frac{2q}{p}-1}}{p(2C_2)^{q/p}M_q} \int_{\mathbb{R}^m}
         |z|^qd\gamma_m(z) \tag*{} 
         \end{align*}We therefore obtain
           \begin{align}\label{mosber5}
            V_q(f)^q\leq  \frac{2^{- (q/p) +
         (q/2)}\sqrt{\pi}\max\{
         e^{2(C_XK_p)^p},2C_Xe^{(2C_XK_p)^p}\}}{(c_X)^{q/p}}\cdot \frac{q\Gamma\big(\frac{q}{p}\big)}{p\Gamma\big(\frac{q+1}{2}\big)}\int_{\mathbb{R}^m}|z|^qd\gamma_m(z).
           \end{align}

 To get another estimate, we repeat the above argument by using the following lemma.

       \begin{lem}[{cf.~\cite[Corollary 1.5]{ledoux}}]\label{mosber1}Let $X$ be an
        mm-space and $\varphi:X\to \mathbb{R}$ a $1$-Lipschitz
        function. Then, for any $r>0$, we have
        \begin{align*}
         (\mu_X \times \mu_X)(\{ (x,y)\in X\times X \mid |\varphi(x)-\varphi(y)|\geq
         r\})\leq 2\alpha_X(r/2).
         \end{align*}
        \end{lem}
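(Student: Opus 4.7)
My plan is to reduce this two-variable concentration estimate to the one-variable concentration of $\varphi$ around a median, by a standard triangle-inequality argument of Ledoux.

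The first step is to fix a median $m_\varphi$ of $\varphi$ and observe that, by the triangle inequality, whenever $|\varphi(x)-\varphi(y)|\geq r$ at least one of $|\varphi(x)-m_\varphi|$ or $|\varphi(y)-m_\varphi|$ must be at least $r/2$. Letting $E:=\{x\in X\mid |\varphi(x)-m_\varphi|\geq r/2\}$, this yields the set inclusion
\[
\{(x,y)\in X\times X\mid |\varphi(x)-\varphi(y)|\geq r\}\ \subseteq\ (E\times X)\cup (X\times E),
\]
so that by Fubini and subadditivity the $(\mu_X\times\mu_X)$-measure of the left-hand side is at most $2\mu_X(E)$.

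The second step is to control $\mu_X(E)$ by $\alpha_X(r/2)$. For this I would bypass Lemma \ref{l2.1} and apply the definition of $\alpha_X$ directly to the half-space $\{x\mid\varphi(x)\leq m_\varphi\}$ (and to its mirror), each of which has $\mu_X$-measure $\geq 1/2$. Because $\varphi$ is $1$-Lipschitz, the open $r/2$-neighbourhood of $\{\varphi\leq m_\varphi\}$ is contained in $\{\varphi<m_\varphi+r/2\}$, so $\mu_X(\varphi\geq m_\varphi+r/2)\leq \alpha_X(r/2)$; combining with the analogous lower-tail estimate gives the required bound.

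The argument is entirely standard; the only point requiring care is constant bookkeeping, specifically exploiting the symmetry of the event $\{|\varphi(x)-\varphi(y)|\geq r\}$ under the interchange $(x,y)\mapsto(y,x)$ together with the disjoint decomposition into $\{\varphi(x)-\varphi(y)\geq r\}$ and $\{\varphi(y)-\varphi(x)\geq r\}$, in order to obtain the sharp factor $2$ in front of $\alpha_X(r/2)$ rather than the looser factor that a naive union bound through $E$ would yield. I do not foresee any conceptual obstacle beyond this bookkeeping, since the whole argument simply reuses the ingredients already recorded in Subsection 2.1.
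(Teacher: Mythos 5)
The paper offers no proof of this lemma (it is quoted from Ledoux), so your proposal stands on its own, and it has a genuine gap: as written it proves the inequality with $4\alpha_X(r/2)$ on the right-hand side, not $2\alpha_X(r/2)$. Your first step gives $(\mu_X\times\mu_X)(M)\leq 2\mu_X(E)$ with $E=\{|\varphi-m_\varphi|\geq r/2\}$, and your second step gives $\mu_X(E)\leq \mu_X(\{\varphi\geq m_\varphi+r/2\})+\mu_X(\{\varphi\leq m_\varphi-r/2\})\leq 2\alpha_X(r/2)$; the two tails cannot in general be merged into a single $\alpha_X(r/2)$ (e.g.\ for the uniform measure on $[0,1]$ with $\varphi=\mathrm{id}$ both tails are of size $\alpha_X(r/2)$ simultaneously). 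So the product of the two steps is $4\alpha_X(r/2)$.

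You flag this and propose to recover the factor $2$ by symmetry: writing $M=M^+\sqcup M^-$ with $M^{\pm}$ the two one-sided events, which indeed gives $(\mu_X\times\mu_X)(M)=2(\mu_X\times\mu_X)(M^+)$. But this only reduces the problem to showing $(\mu_X\times\mu_X)(M^+)\leq \alpha_X(r/2)$, and the tools you have set up do not give that. The sharpest inclusion available from the median argument is $M^+\subseteq (U\times X)\cup(X\times L)$ with $U=\{\varphi\geq m_\varphi+r/2\}$ and $L=\{\varphi\leq m_\varphi-r/2\}$ (if $\varphi(x)\geq\varphi(y)+r$ and $\varphi(x)<m_\varphi+r/2$ then $\varphi(y)<m_\varphi-r/2$), which yields $(\mu_X\times\mu_X)(M^+)\leq\mu_X(U)+\mu_X(L)\leq 2\alpha_X(r/2)$ --- and hence $4\alpha_X(r/2)$ overall, exactly as before. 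In other words, the missing factor of $2$ is not ``constant bookkeeping'': the union bound over the two tails of $\varphi$ around its median is incurred once in either route, and the symmetry trick does not cancel it. Note that this is not harmless for the paper either: the lemma feeds the display preceding (\ref{mosber2}) through the bound $2\alpha_X(r/2)\leq 2C_Xe^{-c_X2^{-p}r^p}$, so a constant $4$ would change $\widetilde{A}_{m,X}$ and $\widetilde{B}_{m,X}$. To actually prove the stated constant you need a different argument (or must reproduce the one behind Ledoux's Corollary 1.5), not a rearrangement of the one you sketch.
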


        Let $X$, $\varphi:X\to
        \mathbb{R}$, and $f:X\to \mathbb{R}^m$ be as above. By Lemma \ref{mosber1}, we calculate that
        \begin{align*}
        \widetilde{V}_q(\varphi)^q = \ &\int_{0}^{+\infty} (\mu_X \times \mu_X) ( \{
        (x,y)\in X \times X\mid |\varphi(x)-\varphi (y)|\geq r\})  d (r^q) \\
        \leq \ &2 C_X
        \int_{0}^{+\infty}e^{-c_X2^{-p}r^p}  d(r^q) \tag*{} \\ = \ &\frac{\sqrt{\pi}q2^{q+(3/2)-(q/p)}C_XM_{\frac{2q}{p}-1}}{p(c_X)^{q/p}}. \tag*{}
         \end{align*}We hence get
        \begin{align}\label{mosber2}
        \widetilde{V}_q(f)^q= \ &
         \int_{X\times X}\Big\{\frac{1}{M_q}\int_{\mathbb{R}^m}|z\cdot (f(x)-f(y))|^q 
         d\gamma_m(z)\Big\}d(\mu_X \times \mu_X)(x,y)  \\
         \leq \ &   \frac{\sqrt{\pi}
         q2^{q+(3/2)-(q/p)}C_XM_{\frac{2q}{p}-1}}{p(c_X)^{q/p}M_q} \int_{\mathbb{R}^m}
         |z|^qd\gamma_m(z) \tag*{} \\
         = \ & \frac{\sqrt{\pi}
         2^{(q/2)+1}C_X}{(c_X)^{q/p}} \cdot \frac{q\Gamma \big(\frac{q}{p}\big)}{p\Gamma
           \big(\frac{q+1}{2}\big)}
         \int_{\mathbb{R}^m}
         |z|^qd\gamma_m(z) \tag*{}.
         \end{align}
   Since $V_q(f)\leq \widetilde{V}_q(f)$, we therefore obtain
          \begin{align}\label{mosber3}
           V_q(f)^q\leq \frac{\sqrt{\pi}
         2^{(q/2)+1}C_X}{(c_X)^{q/p}} \cdot \frac{q\Gamma \big(\frac{q}{p}\big)}{p\Gamma
           \big(\frac{q+1}{2}\big)}
         \int_{\mathbb{R}^m}
         |z|^qd\gamma_m(z).
           \end{align}
\begin{rem}\label{egrr}\upshape We shall compare the inequality
 (\ref{mosber5}) with the inequality (\ref{mosber3}). For fixed $p, c_X, C_X$, the inequality
 (\ref{mosber3}) is worse than the inequality (\ref{mosber5}) if $q$ is large enough. If we fix $q,c_X$, then the inequality (\ref{mosber3}) is sharper than
 the inequality (\ref{mosber5}) if $p$ or $C_X$ is
 large enough. 
 \end{rem}

 We next explain the following observation by Gromov.
  \begin{prop}[{cf.~\cite[Section 13]{gromovcat}}]\label{p3.1}Let
   $f:X\to N$ be a $1$-Lipschitz map from an mm-space $X$ to an
   $m$-dimensional Hadamard manifold such that $f_{\ast}(\mu_X)\in \mathcal{P}^2(N)$. We identify the tangent space at $\mathbb{E}(f)$ with the Euclidean
   space $\mathbb{R}^m$ and consider the map
   $f_0:=\exp_{\mathbb{E}(f)}^{-1}\circ f:X\to \mathbb{R}^m$. Then, the
   map $f_0$ is a $1$-Lipschitz map with expectation zero satisfying that
\begin{align}\label{miki1}
 \mu_X(\{ x\in X \mid \dist_N(f(x), \mathbb{E}(f))\geq r\})= \mu_X(\{
 x\in X \mid |f_0(x)|\geq r\})
 \end{align}for any $r>0$.
   \begin{proof}The $1$-Lipschitz continuity of the map $f_0$ follows
    from Toponogov's comparison theorem. By Lemma \ref{bl2}, the expectation of the map
    $f_0$ is zero. Since the map $\exp^{-1}_{\mathbb{E}(f)}$ is
    isometric on rays issuing from $\mathbb{E}(f)$, we obtain
    (\ref{miki1}). This completes the proof.
    \end{proof}
   \end{prop}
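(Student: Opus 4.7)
The plan is to establish the three assertions in turn: (i) $f_0$ is $1$-Lipschitz, (ii) $f_0$ has expectation zero, and (iii) the distributional identity (\ref{miki1}). Each follows from a standard property of the exponential map at a point of a Hadamard manifold, so the proof is essentially a three-line argument; there is no deep obstacle to surmount.

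For (i), I would invoke the consequence of Toponogov's comparison theorem in non-positive curvature that the inverse exponential map $\exp_{\mathbb{E}(f)}^{-1}\colon N\to T_{\mathbb{E}(f)}N\cong\mathbb{R}^m$ is distance non-increasing (since $\exp_{\mathbb{E}(f)}$ is distance non-decreasing on a Hadamard manifold). Composing the $1$-Lipschitz map $f$ with this $1$-Lipschitz map yields the $1$-Lipschitz continuity of $f_0$.

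For (iii), the cleanest step to dispatch next is the identity: $\exp_{\mathbb{E}(f)}$ sends each radial ray in $T_{\mathbb{E}(f)}N$ isometrically to the corresponding geodesic ray from $\mathbb{E}(f)$, so $|\exp^{-1}_{\mathbb{E}(f)}(y)|=\dist_N(y,\mathbb{E}(f))$ for every $y\in N$. Evaluating at $y=f(x)$ gives $|f_0(x)|=\dist_N(f(x),\mathbb{E}(f))$ pointwise, and (\ref{miki1}) follows. As a by-product, this shows $(f_0)_{\ast}(\mu_X)\in\mathcal{P}^2(\mathbb{R}^m)$, since $f_{\ast}(\mu_X)\in\mathcal{P}^2(N)$.

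Finally, for (ii), I would apply Lemma \ref{bl2} to $\nu=f_{\ast}(\mu_X)$ and $x=\mathbb{E}(f)=b(\nu)$ to obtain $\int_N \exp^{-1}_{\mathbb{E}(f)}(y)\,d\nu(y)=0$; by the change-of-variables formula this equals $\int_X f_0(x)\,d\mu_X(x)$. Combined with Lemma \ref{bl1} applied on $\mathbb{R}^m$, we conclude $\mathbb{E}(f_0)=b((f_0)_{\ast}(\mu_X))=\int_X f_0(x)\,d\mu_X(x)=0$. The only genuinely non-trivial input is the Toponogov-type estimate used in (i); the remaining steps are direct invocations of the preliminary lemmas.
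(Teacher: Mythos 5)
Your proposal is correct and follows exactly the same three-step argument as the paper: Toponogov's comparison theorem for the $1$-Lipschitz continuity of $f_0$, Lemma \ref{bl2} (combined with Lemma \ref{bl1}) for the vanishing of the expectation, and the fact that $\exp^{-1}_{\mathbb{E}(f)}$ is isometric on rays issuing from $\mathbb{E}(f)$ for the identity (\ref{miki1}). You merely spell out the details that the paper's proof leaves implicit.
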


           \begin{proof}[Proof of Theorem $\ref{mth}$]According to Proposition \ref{p3.1}, we only prove the case of
            $N=\mathbb{R}^m$. For $p=1,2$, we put
 \begin{align*}
  (C_1,C_2):= \Big(\frac{2\sqrt{\pi}C_X}{p} , \frac{\sqrt{2}}{(c_X)^{1/p}}\Big)
             \text{ or }\Big(\frac{\sqrt{\pi}}{p}\max\{ e^{2(C_X
             K_p)^p}, 2C_Xe^{(2C_XK_p)^p}\},
             \frac{2^{-(1/p)+(1/2)}}{(c_X)^{1/p}} \Big).
             \end{align*}Let $f:X\to \mathbb{R}^m$ be an arbitrary
            $1$-Lipschitz map with expectation zero.

            Assuming that $p=1$, we first prove the inequality
 (\ref{s1.1}). According to the inequalities (\ref{mosber5}) and (\ref{mosber3}), for $\lambda>0$, we estimate
            \begin{align*}
             \int_Xe^{\lambda |f(x)|}d\mu_X(x)=\
             &1+ \sum_{k=1}^{\infty}\frac{\lambda^k}{k!}V_k(f)^k\\
             \leq \ & 1+C_1C_2\lambda
             \sum_{k=1}^{\infty}\frac{(C_2\lambda)^{k-1}}{\Gamma
             \big(\frac{k+1}{2}\big)}\int_{\mathbb{R}^m}|z|^kd\gamma_m(z)\\
             = \ & 1+C_1C_2\lambda
             \int_{\mathbb{R}^m}|z|\sum_{k=0}^{\infty}\frac{(C_2\lambda |z|)^k}{\Gamma
             \big(\frac{k}{2}+1\big)} d\gamma_m(z).
             \end{align*}
            Since
            \begin{align*}
             \sum_{k=0}^{\infty}\frac{(C_2\lambda |z|)^k}{\Gamma
             \big(\frac{k}{2}+1\big)}=
             \sum_{k=0}^{\infty}\frac{(C_2\lambda |z|)^{2k}}{\Gamma
             (k+1)} + \sum_{k=0}^{\infty}\frac{(C_2\lambda
             |z|)^{2k+1}}{\Gamma \big(\frac{2k+1}{2}+1\big)} \leq
             e^{(C_2\lambda)^2 |z|^2}+C_2\lambda |z|e^{(C_2\lambda)^2|z|^2},
             \end{align*}we thus get
            \begin{align*}
             &\int_{X}e^{\lambda |f(x)|}d\mu_X(x)\\
             \leq \ &
             1+ C_1C_2\lambda
             \int_{\mathbb{R}^m}|z|e^{(C_2\lambda)^2|z|^2}d\gamma_m(z) +
             C_1(C_2\lambda)^2\int_{\mathbb{R}^m} |z|^2
             e^{(C_2\lambda)^2|z|^2} d\gamma_m(z).
             \end{align*}Assume that $2(C_2\lambda)^2<1$. Then, we have
            \begin{align*}
             \int_{X}e^{\lambda|f(x)|}d\mu_X(x) \leq
             1+C_1C_2\lambda
             (1-2(C_2\lambda)^2)^{-(m+1)/2}\int_{\mathbb{R}^m}|z|d\gamma_m(z)\\
             +C_1(C_2\lambda)^2(1-2(C_2\lambda)^2)^{-(m/2)-1}\int_{\mathbb{R}^m}|z|^2d\gamma_m(z).
             \end{align*}
            By using the Chebyshev inequality, we hence have
            \begin{align*}
             \mu_X(\{ x\in X \mid |f(x)|\geq
             r\})\leq\ & e^{-\lambda r}
             \int_{X}e^{\lambda |f(x)|}d\mu_X(x)\\
             \leq \ &
             e^{-\lambda r}\Big\{1+C_1C_2\lambda
             (1-2(C_2\lambda)^2)^{-(m+1)/2}\int_{\mathbb{R}^m}|z|d\gamma_m(z)\\
              &\hspace{0.5cm}+C_1(C_2\lambda)^2(1-2(C_2\lambda)^2)^{-(m/2)-1}\int_{\mathbb{R}^m}|z|^2d\gamma_m(z)
             \Big\}.
             \end{align*}Substituting $\lambda:=1/(2C_2\sqrt{m})$ to
            this inequality, we therefore obtain
            \begin{align}\label{egs11}
             &\mu_X(\{ x\in X \mid |f(x)|\geq r\}\\ \leq \ &
             e^{-r/(2C_2\sqrt{m}) }\Big\{1+\frac{C_1}{2\sqrt{m}}
             \Big(1-\frac{1}{2m}\Big)^{-(m+1)/2}\int_{\mathbb{R}^m}|z|d\gamma_m(z)
             \tag*{} \\
            &  \hspace{6cm}+\frac{C_1}{4m}\Big(1-\frac{1}{2m}\Big)^{-(m/2)-1}\int_{\mathbb{R}^m}|z|^2d\gamma_m(z)
             \Big\}. \tag*{}
             \end{align}Observe that 
            \begin{align}\label{egs12}
             \int_{\mathbb{R}^m}|z|^2d\gamma_m(z)=m,
             \int_{\mathbb{R}^m}|z|d\gamma_m(z)\leq \Big( \int_{\mathbb{R}^m}|z|^2d\gamma_m(z)\Big)^{1/2}=\sqrt{m},
             \end{align}and $(1+1/x)^x\leq e$ for all $x>0$. Applying
            these to (\ref{egs11}), we obtain the inequality (\ref{s1.1}).

            Assume that $p=2$. We next prove (\ref{s1.2}) in a similar way
            to the above proof. By virtue of the inequalities (\ref{mosber5})
 and (\ref{mosber3}), given $\lambda
            >0$, we have
            \begin{align*}
             \int_X e^{\lambda |f(x)|}
             d\mu_X(x)=\ & 1+\sum_{k=1}^{\infty}
             \frac{\lambda^k}{k!}V_k(f)^k\\
             \leq \ & 1 + C_1 C_2 \lambda \sum_{k=1}^{\infty} \frac{(C_2
             \lambda)^{k-1}}{(k-1)!}\int_{\mathbb{R}^m}
             |z|^kd\gamma_m(z)\\
             = \ & 1+ C_1 C_2 \lambda \int_{\mathbb{R}^m}|z|e^{C_2
             \lambda |z|}d\gamma_m(z).
             \end{align*}Since
            \begin{align*}
             C_2 \lambda |z| = (\sqrt{2m}C_2\lambda)\cdot
             \Big(\frac{|z|}{\sqrt{2m}}\Big) \leq m(C_2\lambda)^2 +
             \frac{|z|^2}{4m},
             \end{align*}we calculate that
            \begin{align*}
             \int_{\mathbb{R}^m}|z|e^{C_2\lambda |z|}d\gamma_m(z)\leq \
             &e^{m(C_2\lambda)^2}\int_{\mathbb{R}^m}|z|e^{\frac{|z|^2}{4m}}d\gamma_m(z)\\
             = \ & e^{m(C_2\lambda)^2} \Big(1-\frac{1}{2m}\Big)^{-\frac{m+1}{2}}\int_{\mathbb{R}^m}|z|d\gamma_m(z).
             \end{align*}
            We hence get
            \begin{align*}
             \mu_X(\{ x\in X \mid |f(x)|\geq
             r\})\leq \ &e^{-\lambda r}\int_X e^{\lambda
             |f(x)|}d\mu_X(x)\\
             \leq \ & e^{-\lambda r}\Big\{ 1+ C_1C_2 \lambda
             e^{m(C_2\lambda)^2}
             \Big(1-\frac{1}{2m}\Big)^{-\frac{m+1}{2}}\int_{\mathbb{R}^m}|z|d\gamma_m(z)
             \Big\}.
             \end{align*}Putting $\lambda:= sr/(\sqrt{m}C_2)$ for any
            $s>0$, we thus have the estimate
            \begin{align*} \mu_X(\{ x\in X \mid |f(x)|\geq
             r\})\leq \ & e^{-sr^2/(\sqrt{m}C_2)}\Big\{ 1+
             \frac{C_1}{\sqrt{m}} sre^{s^2r^2}
             \Big(1-\frac{1}{2m}\Big)^{-\frac{m+1}{2}}\int_{\mathbb{R}^m}|z|d\gamma_m(z)\Big\}\\
             \leq \ & e^{-sr^2/(\sqrt{m}C_2)}\Big\{ 1+
             \frac{C_1}{\sqrt{m}} e^{2s^2r^2}
             \Big(1-\frac{1}{2m}\Big)^{-\frac{m+1}{2}}\int_{\mathbb{R}^m}|z|d\gamma_m(z)\Big\}
             \end{align*}Substituting $s:=1/(4\sqrt{m}C_2)$ into this
            inequality, we calculate that
            \begin{align*}
             \mu_X(\{x\in X\mid |f(x)|\geq r
             \})\leq 
             e^{-r^2/(8m(C_2)^2)}\Big\{1+\frac{C_1}{\sqrt{m}}\Big(1-\frac{1}{2m}\Big)^{-\frac{m+1}{2}}
             \int_{\mathbb{R}^m}|z|d\gamma_m(z)\Big\}.
             \end{align*}As a consequence, by (\ref{egs12}), we obtain
            the inequality (\ref{s1.2}). This completes the proof.
            \end{proof}

            \section{Applications and remarks}

In this section, we obtain two applications of Corollary \ref{mc} and compare
our results with the results by S. Artstein \cite{artsein} and Ledoux
and Oleszkiewicz \cite{ledole}. 
         \begin{cor}\label{egc4}Let $m\leq n$. Then, for any $r>0$, we have
    \begin{align}\label{egs15}
     &\mu_{\mathbb{S}^n}(\mathbb{S}^n \setminus (\mathbb{S}^{n-m})_r)
     \leq 
     \min\{A_m e^{-(1/(3\pi))\sqrt{2n/m}r} , \widetilde{A}_m
     e^{-(1/(3\pi))\sqrt{n/(2m)}r}, 
     \\ & \hspace{6cm}B_me^{-((n-1)/(4\pi^2m))r^2}, \widetilde{B}_me^{-((n-1)/(8\pi^2m))r^2}\} \tag*{}.
     \end{align}
            \begin{proof}Applying Corollary \ref{mc} to the projection
             \begin{align*}
              \mathbb{S}^n\ni (x_1,x_2, \cdots
             x_{n+1})\mapsto (x_1,x_2,\cdots,x_m)\in \mathbb{R}^m
              ,\end{align*}
             we
             obtain (\ref{egs15}). This completes the proof.
             \end{proof}
          \end{cor}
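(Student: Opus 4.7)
The plan is to apply Corollary \ref{mc} directly to the coordinate projection $f\colon\mathbb{S}^n\to\mathbb{R}^m$, $f(x_1,\ldots,x_{n+1})=(x_1,\ldots,x_m)$, and then convert the resulting tail bound on $|f|$ into a tail bound on the spherical distance from $\mathbb{S}^{n-m}$.

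First I would record three easy facts. (i) The map $f$ is $1$-Lipschitz on $\mathbb{S}^n$: it is the restriction of a $1$-Lipschitz map on $\mathbb{R}^{n+1}$, and the inclusion $\mathbb{S}^n\hookrightarrow\mathbb{R}^{n+1}$ is weakly contracting when $\mathbb{S}^n$ carries its intrinsic distance. (ii) By symmetry of $\mu_{\mathbb{S}^n}$ under each reflection $x_i\mapsto -x_i$ for $i\leq m$, one has $\int_{\mathbb{R}^m}y\,df_{\ast}(\mu_{\mathbb{S}^n})(y)=0$, and Lemma \ref{bl1} then gives $\mathbb{E}(f)=0$. (iii) For $x\in\mathbb{S}^n$, maximising $x\cdot y$ over $y\in\mathbb{S}^{n-m}=\{y_1=\cdots=y_m=0\}\cap\mathbb{S}^n$ is achieved at $y=(0,\ldots,0,x_{m+1},\ldots,x_{n+1})/\sqrt{1-|f(x)|^2}$, which yields the distance identity
\[
\dist_{\mathbb{S}^n}(x,\mathbb{S}^{n-m})=\arccos\sqrt{1-|f(x)|^2}=\arcsin|f(x)|.
\]

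It follows that, for $r\leq\pi/2$,
\[
\mathbb{S}^n\setminus(\mathbb{S}^{n-m})_r=\{x\in\mathbb{S}^n:|f(x)|\geq\sin r\},
\]
while for $r>\pi/2$ this set is empty and (\ref{egs15}) is trivial. Using the standard facts $\lambda_1(\mathbb{S}^n)=n$ and $\ric_{\mathbb{S}^n}=(n-1)g$, I would apply Corollary \ref{mc} to $f$ with its ``$r$'' replaced by $\sin r$, obtaining four exponential bounds in the variable $\sin r$. The elementary inequality $\sin r\geq 2r/\pi$ on $[0,\pi/2]$ then upgrades each exponent; for example, $(1/3)\sqrt{n/(2m)}\,\sin r\geq(1/(3\pi))\sqrt{2n/m}\,r$ and $((n-1)/(16m))\sin^2 r\geq((n-1)/(4\pi^2m))r^2$, and similarly for the $\widetilde{A}_m$ and $\widetilde{B}_m$ bounds. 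Taking the minimum of the four resulting estimates yields (\ref{egs15}).

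There is no real obstacle; the only substantive check is the distance identity in (iii), after which the rest is bookkeeping of the four constants through the substitution $\sin r\mapsto 2r/\pi$.
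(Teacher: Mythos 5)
Your proposal is correct and is exactly the argument the paper intends: the paper's one-line proof (``apply Corollary \ref{mc} to the projection'') tacitly relies on the identity $\dist_{\mathbb{S}^n}(x,\mathbb{S}^{n-m})=\arcsin|f(x)|$ and the bound $\sin r\geq 2r/\pi$ on $[0,\pi/2]$, which is where the factors of $\pi$ in the exponents come from, and your substitution reproduces all four constants correctly (with $\lambda_1(\mathbb{S}^n)=n$, $\ric_{\mathbb{S}^n}=n-1$). You have simply supplied the details the paper omits, so this is the same approach.
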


             The following corollary is a consequence of the theorem of the
    isoperimetry of waists of the Euclidean sphere by Gromov
    (\cite[Section 1]{gromov2}) and the inequality (\ref{egs15}).

     \begin{cor}\label{egc5}Let
 $m$ and $n$ be two natural numbers such that $m\leq n$ and $f:\mathbb{S}^n
 \to \mathbb{R}^m$ a continuous map. Then, there exists a point
 $z_f\in \mathbb{R}^m$ such that
  \begin{align*}
  \mu_{\mathbb{S}^n}(\mathbb{S}^n \setminus
  &(f^{-1}(z_f))_{r})\leq  \min\{A_m e^{-(1/(3\pi))\sqrt{2n/m}r} , \widetilde{A}_m
     e^{-(1/(3\pi))\sqrt{n/(2m)}r}, 
     \\ & \hspace{6cm}B_me^{-((n-1)/(4\pi^2m))r^2}, \widetilde{B}_me^{-((n-1)/(8\pi^2m))r^2}\}
  \end{align*}for any $r>0$.
  \end{cor}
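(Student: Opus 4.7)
The plan is to reduce this corollary directly to Corollary \ref{egc4} (the estimate \ref{egs15}) via Gromov's isoperimetry of waists theorem, which is already cited in the preamble to the statement. The waist theorem is the only nontrivial ingredient; everything else is a one-line deduction.

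More precisely, Gromov's waist inequality for continuous maps $f:\mathbb{S}^n\to \mathbb{R}^m$ with $m\leq n$ asserts that there exists a point $z_f\in \mathbb{R}^m$ (a ``waist level'') such that, for every $r>0$,
\begin{equation*}
  \mu_{\mathbb{S}^n}\bigl((f^{-1}(z_f))_r\bigr)\geq \mu_{\mathbb{S}^n}\bigl((\mathbb{S}^{n-m})_r\bigr),
\end{equation*}
where $\mathbb{S}^{n-m}$ is viewed as the equatorial subsphere of $\mathbb{S}^n$ as in the setup preceding Corollary \ref{egc4}. So my first step is to quote this inequality verbatim, fixing such a $z_f$.

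Taking complements in $\mathbb{S}^n$ and using $\mu_{\mathbb{S}^n}(\mathbb{S}^n)=1$, the displayed inequality becomes
\begin{equation*}
  \mu_{\mathbb{S}^n}\bigl(\mathbb{S}^n\setminus (f^{-1}(z_f))_r\bigr)\leq \mu_{\mathbb{S}^n}\bigl(\mathbb{S}^n\setminus (\mathbb{S}^{n-m})_r\bigr).
\end{equation*}
At this point I simply plug in the four-term bound from Corollary \ref{egc4} for the right-hand side, which yields exactly the claimed estimate. Nothing further needs to be computed.

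The only genuine ``difficulty'' is invoking Gromov's waist theorem, whose proof is far from elementary; but since it is quoted from \cite{gromov2}, the corollary itself is essentially a corollary in the literal sense, and I would write it in two short paragraphs: one that cites the waist inequality and one that combines it with \eqref{egs15}.
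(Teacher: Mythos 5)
Your proposal is correct and coincides with the paper's argument: the paper likewise derives Corollary \ref{egc5} by combining Gromov's waist inequality (which supplies the point $z_f$ with $\mu_{\mathbb{S}^n}((f^{-1}(z_f))_r)\geq \mu_{\mathbb{S}^n}((\mathbb{S}^{n-m})_r)$) with the bound \eqref{egs15} of Corollary \ref{egc4} after passing to complements. No gaps.
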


 Let us explain S. Artstein's results for the estimates of the values
 $\mu_{\mathbb{S}^n}(\mathbb{S}^n \setminus (\mathbb{S}^m)_r)$.
 
 For two variables $A$ and $B$ depending on $n$, $A\approx B$ means that
 $\lim_{n\to \infty} (A/B)=1$. Given
  $0<r <\pi/2$ and $0<\lambda <1$, we put
  \begin{align*}
   u(r,\lambda):= (1-\lambda)\log \frac{(1-\lambda)}{\sin^{2}r} +
   \lambda \log \frac{\lambda}{\cos^2 r}.
   \end{align*}Observe that $u(r,\lambda)\geq 0$ holds for all $r,\lambda$.

 \begin{thm}[{cf.~\cite[Theorem 3.1]{artsein}}]\label{egt6}For any $0<r<\pi/2$ and
  $0<\lambda <1$, the following estimates $(1)$ and $(2)$ both hold as $n\to \infty$. 
\begin{enumerate}
     \item[$(1)$] If $\sin^2 r >1-\lambda$, then we have
     \begin{align*}
      \mu_{\mathbb{S}^{n}}(\mathbb{S}^n \setminus (\mathbb{S}^{\lambda n})_{r}) \approx    \frac{1}{\sqrt{n\pi}} \frac{\sqrt{\lambda (1-\lambda)}}{\sin^2
      r -(1-\lambda)}e^{-\frac{n}{2}u(r,\lambda)}.
      \end{align*}

  \item[$(2)$] If $\sin^2 r <1-\lambda$, then we have
     \begin{align*}
      \mu_{\mathbb{S}^n}( \mathbb{S}^n \setminus
      (\mathbb{S}^{\lambda n})_{r}) \approx 1- \frac{1}{\sqrt{n\pi}} \frac{\sqrt{\lambda (1-\lambda)}}{\sin^2
      r -(1-\lambda)}e^{-\frac{n}{2}u(r,\lambda)}.
      \end{align*}
 \end{enumerate}
 \end{thm}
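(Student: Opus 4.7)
The plan is to reduce the geometric measure to a one-variable integral against a Beta density, and then apply Laplace's method at an interval endpoint.

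First, identify $\mathbb{S}^{\lambda n}$ with $\{y \in \mathbb{S}^n : y_{\lambda n + 2} = \cdots = y_{n+1} = 0\}$. A direct computation gives $\dist_{\mathbb{S}^n}(x,\mathbb{S}^{\lambda n}) = \arccos \sqrt{x_1^2 + \cdots + x_{\lambda n + 1}^2}$, and hence
$$\mu_{\mathbb{S}^n}\bigl(\mathbb{S}^n \setminus (\mathbb{S}^{\lambda n})_r\bigr) \;=\; \mu_{\mathbb{S}^n}\bigl(\bigl\{x : S(x) \geq \sin^2 r\bigr\}\bigr), \qquad S(x) := \sum_{i=\lambda n + 2}^{n+1} x_i^2.$$
Under the uniform measure on $\mathbb{S}^n$ the variable $S$ has the Beta distribution with shape parameters $a = (1-\lambda)n/2$ and $b = (\lambda n + 1)/2$, so the above measure equals $B(a,b)^{-1} \int_{\sin^2 r}^{1} s^{a-1}(1-s)^{b-1}\,ds$.

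I now write $s^{a-1}(1-s)^{b-1} = e^{(n/2)\phi(s)} h(s)$, where $\phi(s) = (1-\lambda)\log s + \lambda \log(1-s)$ and $h(s) = s^{-1}(1-s)^{-1/2}$. The function $\phi$ has a unique critical point $s^\ast = 1-\lambda$ in $(0,1)$, and the algebraic identity $\phi(s^\ast) - \phi(\sin^2 r) = u(r,\lambda)$ places the desired rate into the analysis. In case $(1)$, $\sin^2 r > 1-\lambda$, so $s^\ast \notin [\sin^2 r, 1]$ and $\phi$ is strictly decreasing on $[\sin^2 r, 1]$; a one-sided Laplace (Watson-type) lemma yields
$$\int_{\sin^2 r}^{1} e^{(n/2)\phi(s)} h(s)\,ds \;\sim\; \frac{h(\sin^2 r)\, e^{(n/2)\phi(\sin^2 r)}}{(n/2)\,|\phi'(\sin^2 r)|}, \quad n \to \infty,$$
with $|\phi'(\sin^2 r)| = (\sin^2 r - (1-\lambda))/(\sin^2 r \cos^2 r)$. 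In case $(2)$, $s^\ast$ lies inside $[\sin^2 r, 1]$ and the integral captures the full normalization; the sharp correction then comes from applying the same Watson lemma to $\int_{0}^{\sin^2 r}$ (whose exponent now peaks at the upper endpoint) and subtracting the result from $1$.

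Finally I expand $B(a,b) = \Gamma(a)\Gamma(b)/\Gamma(a+b)$ via Stirling to obtain $B(a,b)^{-1} \sim \sqrt{\lambda(1-\lambda)n/(4\pi)}\, e^{-(n/2)\phi(s^\ast)}$, and multiply against the Laplace estimate. The identity $\phi(\sin^2 r) - \phi(s^\ast) = -u(r,\lambda)$ produces the exponential factor $e^{-(n/2)u(r,\lambda)}$, while the polynomial pieces from Stirling and from the Watson lemma combine to give the claimed prefactor of order $1/(\sqrt{n\pi}(\sin^2 r - (1-\lambda)))$. The main obstacle will be the precise bookkeeping of the $O(1)$ Stirling corrections together with the factors $h(\sin^2 r)$ and $|\phi'(\sin^2 r)|$; these must cancel so that no stray trigonometric factor survives in the final prefactor. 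A subsidiary subtlety, excluded by the hypotheses but worth flagging, is the transitional case $\sin^2 r = 1-\lambda$, where $\phi'$ vanishes at the endpoint and the geometric factor $1/(n|\phi'|)$ must be replaced by the Gaussian factor $\sqrt{2/(n|\phi''|)}$.
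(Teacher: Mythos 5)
This theorem is not proved in the paper at all: it is quoted verbatim (as ``cf.~\cite[Theorem 3.1]{artsein}'') from Artstein's work, so there is no in-paper argument to compare against. Your reduction is the natural one and is in the same spirit as Artstein's own analysis: the squared normal component $S$ is indeed Beta$(a,b)$ with $a=(1-\lambda)n/2$, $b=(\lambda n+1)/2$, the identity $\phi(s^\ast)-\phi(\sin^2 r)=u(r,\lambda)$ is correct, and the one-sided endpoint Laplace estimate with $|\phi'(\sin^2 r)|=(\sin^2 r-(1-\lambda))/(\sin^2 r\cos^2 r)$ is also correct.

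The gap is exactly in the step you defer as ``precise bookkeeping,'' and it does not close as you hope. First, your Stirling asymptotic is off: with $b=(\lambda n+1)/2$ the extra $\tfrac12$ matters at this order of precision, and one gets $B(a,b)^{-1}\sim\sqrt{(1-\lambda)n/(4\pi)}\,e^{-(n/2)\phi(s^\ast)}$, not $\sqrt{\lambda(1-\lambda)n/(4\pi)}\,e^{-(n/2)\phi(s^\ast)}$ (check against the duplication formula at $\lambda=1/2$). Second, the trigonometric factors do not cancel: $h(\sin^2 r)/|\phi'(\sin^2 r)|=\cos r/(\sin^2 r-(1-\lambda))$, so assembling the pieces in case $(1)$ yields
\begin{align*}
\mu_{\mathbb{S}^n}(\mathbb{S}^n\setminus(\mathbb{S}^{\lambda n})_r)\sim\frac{1}{\sqrt{n\pi}}\cdot\frac{\sqrt{1-\lambda}\,\cos r}{\sin^2 r-(1-\lambda)}\,e^{-\frac{n}{2}u(r,\lambda)},
\end{align*}
which agrees with the stated prefactor $\sqrt{\lambda(1-\lambda)}$ only when $\cos r=\sqrt{\lambda}$, i.e.\ on the excluded transitional case $\sin^2 r=1-\lambda$. (The same asymptotic follows independently from the coarea form $\int_r^{\pi/2}\cos^{\lambda n}\theta\,\sin^{(1-\lambda)n-1}\theta\,d\theta$ normalized by a full interior Laplace expansion, using $|\psi''(\theta_0)|=2$ at $\sin^2\theta_0=1-\lambda$, so the discrepancy is not an artifact of your parametrization.) You must either locate the missing factor $\cos r/\sqrt{\lambda}$ or reconcile your normalization with the one used in the quoted statement; as written, the computation you outline proves a formula with a different $O(1)$ constant than the one asserted. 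A minor further remark: with the paper's definition of $\approx$ (ratio tending to $1$), case $(2)$ reduces to showing $\mu\to 1$, since both sides tend to $1$; your finer analysis of $\int_0^{\sin^2 r}$ is not needed for the literal statement, though it would be needed to control $1-\mu$.
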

       \begin{thm}[{cf.~\cite[Theorem 4.1]{artsein}}]\label{egt8}Let $n\geq 6$, $3\leq m\leq
        n-3$, and $\lambda:= m/n$. Put
        \begin{align*}
         l:=\frac{\sin^2 r}{1-\lambda} \text{ and }l':= \frac{\cos^2 r}{\lambda}.
         \end{align*}Then there exist positive constants $c_{n,\lambda}$
        and $c_{n,\lambda}'$ both bounded from above by $3$ satisfying the
        following $(1)$ and $(2)$.

        \begin{enumerate}
        \item[$(1)$] If $\sin^2 r<1-\lambda$, then
        \begin{align*}
       \frac{1}{\sqrt{2\pi}}\frac{e^{-u-c_{n,\lambda}' - \log
         l'}}{\frac{1}{\sqrt{u+c_{n,\lambda}' +\log l'}}+
         \sqrt{u+c_{n,\lambda}' +\log l'}} \leq \mu_{\mathbb{S}^n}((\mathbb{S}^m)_r)\leq
         \frac{1}{\sqrt{2\pi}}\frac{e^{-u-c_{n,\lambda}-\log l}}{\sqrt{u+c_{n,\lambda}+\log l}}.
         \end{align*}

        \item[$(2)$] If $\sin^2 r >1-\lambda$, then
        \begin{align*}
         1-  \frac{1}{\sqrt{2\pi}}\frac{e^{-u-c_{n,\lambda}' - \log
         l'}}{\frac{1}{\sqrt{u+c_{n,\lambda}' +\log l'}}+
         \sqrt{u+c_{n,\lambda}' +\log l'}}
         \leq \mu_{\mathbb{S}^n}((\mathbb{S}^m)_r)\leq 1- \frac{1}{\sqrt{2\pi}}\frac{e^{-u-c_{n,\lambda}-\log l}}{\sqrt{u+c_{n,\lambda}+\log l}},
         \end{align*}where $u=\frac{n}{2}\big((1-\lambda )\log
        \frac{1-\lambda }{\sin^2 r}      +\lambda \log
        \frac{\lambda}{\cos^2 r}      \big)$.
         \end{enumerate}
        \end{thm}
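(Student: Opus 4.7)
The plan is to reduce $\mu_{\mathbb{S}^n}((\mathbb{S}^m)_r)$ to an incomplete Beta integral and then extract two-sided bounds by Laplace's method with explicit Stirling control.

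First, embed $\mathbb{S}^m\subset\mathbb{S}^n$ as the zero locus of the last $n-m$ coordinates. A short computation gives $\dist_{\mathbb{S}^n}(x,\mathbb{S}^m)=\arccos\sqrt{x_1^2+\cdots+x_{m+1}^2}$, so that $(\mathbb{S}^m)_r=\{x\in\mathbb{S}^n:x_1^2+\cdots+x_{m+1}^2\geq\cos^2 r\}$. Realising the uniform measure on $\mathbb{S}^n$ as the normalisation of a standard Gaussian on $\mathbb{R}^{n+1}$, the variable $T=X_1^2+\cdots+X_{m+1}^2$ has the Beta distribution with parameters $((m+1)/2,(n-m)/2)$, giving
\begin{equation*}
\mu_{\mathbb{S}^n}((\mathbb{S}^m)_r)=\frac{\Gamma(\tfrac{n+1}{2})}{\Gamma(\tfrac{m+1}{2})\Gamma(\tfrac{n-m}{2})}\int_{\cos^2 r}^{1}t^{(m-1)/2}(1-t)^{(n-m)/2-1}\,dt.
\end{equation*}

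Next I would pass to exponential form. Set $\lambda=m/n$ and $\psi(t)=\lambda\log t+(1-\lambda)\log(1-t)$, so that the integrand equals $\exp(\tfrac{n}{2}\psi(t))/\sqrt{t(1-t)}$; a short check shows $\tfrac{n}{2}(\psi(\lambda)-\psi(\cos^2 r))=u$. Applying Stirling in the bilateral form $\Gamma(x)=\sqrt{2\pi/x}(x/e)^{x}e^{\theta(x)/(12x)}$ with $\theta(x)\in(0,1)$ to the prefactor yields $\sqrt{n/(2\pi\lambda(1-\lambda))}\cdot e^{\rho_{n,\lambda}}$ with $|\rho_{n,\lambda}|$ uniformly bounded in the range $3\leq m\leq n-3$. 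The quantities $l=\sin^2 r/(1-\lambda)$ and $l'=\cos^2 r/\lambda$ measure how far the endpoint $t_0=\cos^2 r$ sits from the mode $t=\lambda$ of $\psi$.

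In the case $\sin^2 r<1-\lambda$, the function $\psi$ is strictly decreasing on $[t_0,1]$, so concavity of $\psi$ gives $\psi(t)\leq\psi(t_0)+\psi'(t_0)(t-t_0)$; integrating this majorant against the smooth factor $1/\sqrt{t(1-t)}$ and collecting Stirling errors produces the upper bound with $1/\sqrt{u+c_{n,\lambda}+\log l}$, once one recognises that $-\tfrac{n}{2}\psi'(t_0)\sqrt{\cos^2 r\sin^2 r}$ is comparable to $\sqrt{u+\log l}$ up to a bounded additive constant. The lower bound is obtained by restricting the integral to $[t_0,t_0+\kappa/\sqrt n]$ for a suitable $\kappa$, using convexity of $-\psi$ to reverse the inequality on this subinterval, and then bounding the short-interval and long-interval contributions separately; the sum of the two reciprocals $1/\sqrt{u+c'_{n,\lambda}+\log l'}+\sqrt{u+c'_{n,\lambda}+\log l'}$ in the denominator reflects exactly these two contributions. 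The complementary case $\sin^2 r>1-\lambda$ then reduces to the previous one via the identity $\mu_{\mathbb{S}^n}((\mathbb{S}^m)_r)=1-\mu_{\mathbb{S}^n}(\mathbb{S}^n\setminus(\mathbb{S}^m)_r)$ together with the involution $t\mapsto 1-t$, which interchanges $\lambda\leftrightarrow 1-\lambda$ and $l\leftrightarrow l'$.

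The main obstacle is the explicit uniform constant $\max\{c_{n,\lambda},c'_{n,\lambda}\}\leq 3$. One must combine the three Stirling remainders $\theta(x)/(12x)$ at $x\in\{\tfrac{n+1}{2},\tfrac{m+1}{2},\tfrac{n-m}{2}\}$ with the sub-leading Laplace error terms arising from the bounds $\sqrt{t_0(1-t_0)}$ versus $\sqrt{\lambda(1-\lambda)}$, and verify by elementary numerics that their combined contribution stays below $3$ throughout the entire range $n\geq 6$, $3\leq m\leq n-3$. This is delicate but finite bookkeeping rather than a new analytic ingredient, and it is exactly the step where the sharper form of Theorem \ref{egt8} improves upon the purely asymptotic statement of Theorem \ref{egt6}.
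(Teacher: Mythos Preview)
The paper does not prove this theorem at all: it is quoted verbatim from Artstein \cite[Theorem 4.1]{artsein} and used only for comparison in Remark \ref{2354}. There is therefore no ``paper's own proof'' to measure your attempt against.

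That said, your outline is essentially the right one and is in the spirit of Artstein's original argument: reduce to an incomplete Beta integral via the identity $\dist_{\mathbb{S}^n}(x,\mathbb{S}^m)=\arccos\sqrt{x_1^2+\cdots+x_{m+1}^2}$, rewrite the integrand as $\exp\big(\tfrac{n}{2}\psi(t)\big)/\sqrt{t(1-t)}$ with $\psi(t)=\lambda\log t+(1-\lambda)\log(1-t)$, and then run a Laplace-type estimate at the endpoint $t_0=\cos^2 r$, using Stirling with remainder to handle the Gamma prefactor. The symmetry $t\mapsto 1-t$ exchanging the two cases is also correct.

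Where your sketch is thin is exactly where you flag it. The step ``$-\tfrac{n}{2}\psi'(t_0)\sqrt{\cos^2 r\sin^2 r}$ is comparable to $\sqrt{u+\log l}$ up to a bounded additive constant'' is the heart of the matter and is not obvious; it requires a careful two-sided comparison of $\psi(\lambda)-\psi(t_0)$ with the linearised quantity, uniform over the whole range of $r$ and $\lambda$. Likewise, the lower bound via restriction to a subinterval of length $\kappa/\sqrt{n}$ needs an explicit choice of $\kappa$ and a check that the discarded tail is controlled. Finally, packaging the Stirling remainders together with the endpoint corrections into a single constant bounded by $3$ is genuine work, not just bookkeeping: one has to track the worst case near the boundary $m=3$ or $m=n-3$, where one of the Gamma arguments is small. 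None of these points is a wrong idea, but as written the proposal is a plan rather than a proof.
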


        \begin{rem}\label{2354}\upshape Fix $0<\lambda<1$. By using Theorem \ref{egt6}
         or Theorem \ref{egt8}, we have $\lim_{n\to
         \infty}\mu_{\mathbb{S}^n}(\mathbb{S}^n \setminus
         (\mathbb{S}^{\lambda n})_r)=0$ for all $r
         >\sin^{-1} \sqrt{1-\lambda}$, which cannot be derived from
         Corollary \ref{egc4}. Theorem \ref{egt6} and \ref{egt8}
         therefore both contain
         some information for the values
         $\mu_{\mathbb{S}^n}((\mathbb{S}^m)_r)$ which Corollary \ref{egc4}
         does not contain. The author does not know how to derive
         Corollary \ref{egc4} from Theorems \ref{egt6} and \ref{egt8}. However, Corollary
         \ref{egc4} (and also the inequality (\ref{ssss1})) is sharp
         in the following sense. Denote by $\pr_n$ the projection from
         $\mathbb{S}^n(\sqrt{n})$ to the Euclidean space
         $\mathbb{R}^m$. Since the sequence $\{
         (\pr_n)_{\ast}(\mu_{\mathbb{S}^n(\sqrt{n})})\}_{n=1}^{\infty}$
         of probability measures on $\mathbb{R}^m$ weakly converges to
         the canonical Gaussian measure $\gamma_m$ on $\mathbb{R}^m$
         (see \cite[Lemma 1.2]{ledoux1}), by
         using the inequality (\ref{ssss1}) (or Corollary \ref{egc4}), we
         obtain the estimate
         \begin{align}\label{egs26}
          \gamma_m(\{x\in \mathbb{R}^m \mid |x|\geq r\})\leq \min\{B_m
          e^{-(1/(16m))r^2}, \widetilde{B}_me^{-(1/(32m))r^2} \}.
          \end{align}Classically, this inequality was known via an
         another method, see \cite[Section 3.1,
         (3.5)]{ledtal}. This estimate is sharp in a sense because
         \begin{align*}
          \lim_{r\to \infty}\frac{\gamma_m(\{   x\in \mathbb{R}^m \mid |x|\geq r
  \})}{e^{- (1/(2m))r^2}}=1
          \end{align*}(\cite[Theorem 3.8]{ledtal}).
         \end{rem}

         Let us compare our result with the inequality (\ref{s1.4}). 

          \begin{rem}\label{remrem}\upshape Combining the inequalities (\ref{s1.4}) with (\ref{egs26}), we obtain an estimate similar to the
           inequality (\ref{s1.1}).  However, we note that our coefficients of
           the inequality (\ref{s1.1}) are concrete whereas the coefficients
           of the inequality (\ref{s1.4}) are not. An advantage
           of the inequality (\ref{s1.4}) is that we can see
           from the inequality that the map $f$
           concentrates around the expectations if the coefficient $C_X$
           is close to zero. This fact cannot be derived from our
           inequality (\ref{s1.2}). We also remark that their
           proof cannot be applied to the case where $X$
           has the exponential concentration (\ref{s1}) (i.e., the
           case where $p=1$).
           \end{rem}

            \begin{ack}\upshape
        The author would like to express his thanks to
  Professor Takashi Shioya for his valuable suggestions and assistances
 during the preparation of this paper. 
 \end{ack}


\begin{thebibliography}{14}
       \bibitem{artsein}S. Artstein, {\it Proportional concentration
                 phenomena on the sphere}, Israel J. Math. {\bf
                 132}. 337--358, 2002.
              \bibitem{milgro}M. Gromov and V. D. Milman, {\it A topological application of the isoperimetric inequality},
			 Amer. J. Math. {\bf 105}, no. 4, 843--854, 1983.
             \bibitem{gromovcat}M. Gromov, {\it ${\rm CAT}(\kappa)$-spaces:
		 construction and concentration}, (Russian summary)
		 Zap. Nauchn. Sem. S.-Peterburg. Otdel. Mat. Inst. Steklov. (POMI)
                     280, Geom. i Topol. {\bf 7}, 100--140, 299--300, 2001; translation in
J. Math. Sci. (N. Y.) {\bf 119}, no. 2, 178--200, 2004. 
	 \bibitem{gromov2}M. Gromov, {\it Isoperimetry of waists and
			 concentration of maps}, Geom. Funct. Anal., {\bf 13}, no. 1,
			 178--215, 2003.
	 \bibitem{gromov}M. Gromov, {\it Metric structures for Riemannian and non-Riemannian spaces}, Based on the 1981 French
			 original, With appendices by M. Katz, P. Pansu and S. Semmes. Translated from the French by Sean Michael
			 Bates. Progress in Mathematics,
		 {\bf 152}. Birkh\"{a}user Boston, Inc., Boston, MA, 1999.
                 \bibitem{ledoux1}M. Ledoux, {\it Isoperimetry and Gaussian
              analysis}, Lectures on probability theory and statistics
              (Saint-Flour, 1994), 165--294, Lecture Notes in Math.,
              {\bf 1648}, Springer, Berlin, 1996.
      \bibitem{ledoux}M. Ledoux, {\it The concentration of
                     measure phenomenon}, Mathematical Surveys and
                     Monographs, {\bf 89}. American
			 Mathematical Society, Providence, RI, 2001.
            \bibitem{ledole}M. Ledoux and K. Oleszkiewicz, {\it On measure
              concentration of vector-valued
              maps}. Bull. Pol. Acad. Sci. Math.{\bf 55}, no. 3,
              261--278, 2007.
              \bibitem{ledtal}M. Ledoux and M. Talagrand, {\it Probability in Banach spaces. Isoperimetry and
			 processes}, Ergebnisse der Mathematik und ihrer
		 Grenzgebiete (3), {\bf 23}. Springer-Verlag, Berlin, 1991.
             \bibitem{mil1}V. D. Milman, {\it A certain property of functions
		 defined on infinite-dimensional manifolds}, (Russian)
                        Dokl. Akad. Nauk SSSR {\bf 200}, 781--784, 1971.
         	\bibitem{mil2}V. D. Milman, {\it A new proof of A. Dvoretzky's
		 theorem on cross-sections of convex bodies}, (Russian)
		 Funkcional. Anal. i Prilo\v zen. {\bf 5}, no. 4, 28--37,
		 1971.
               \bibitem{mil3}V. D. Milman, {\it Asymptotic properties of
		 functions of several variables that are defined on
		 homogeneous spaces}, Soviet Math. Dokl. {\bf 12},
		 1277--1281, 1971; translated from Dokl. Akad. Nauk SSSR {\bf 199}
		 , 1247--1250, 1971(Russian).
             	 \bibitem{sturm}K-T. Sturm, {\it Probability measures on metric
		 spaces of nonpositive curvature}, Heat kernels and
		 analysis on manifolds, graphs, and metric spaces
		 (Paris, 2002),  357--390, Contemp. Math., {\bf 338}, 
		 Amer. Math. Soc., Providence, RI, 2003.
	\end{thebibliography}
	\end{document}